\providecommand{\tabularnewline}{\\}
\theoremstyle{plain}
\newtheorem{prop}{\protect\propositionname}
\theoremstyle{plain}
\newtheorem{thm}{\protect\theoremname}
\theoremstyle{plain}
\newtheorem{cor}{\protect\corollaryname}
\providecommand{\corollaryname}{Corollary}
\providecommand{\propositionname}{Proposition}
\providecommand{\theoremname}{Theorem}
\begin{document}
\title{Transcendence Results for $\boldsymbol{\Gamma^{\left(n\right)}\left(1\right)}$
and Related Sequences of Generalized Constants}
\author{Michael R. Powers}
\address{Department of Finance, School of Economics and Management, Tsinghua
University, Beijing, China 100084}
\email{powers@sem.tsinghua.edu.cn}
\date{April 12, 2026}
\begin{abstract}
Neither the Euler-Mascheroni constant, $\gamma=0.577215\ldots$, nor
the Euler-Gompertz constant, $\delta=0.596347\ldots$, is currently
known to be irrational. However, it has been proved that at least
one of them is transcendental. The two constants are related through
a well-known equation of Hardy, equivalent to $\gamma+\delta/e=\textrm{Ein}\left(1\right)$,
which recently has been generalized to $\gamma^{\left(n\right)}+\delta^{\left(n\right)}/e=\eta^{\left(n\right)},\:n\geq0$
for sequences of constants $\gamma^{\left(n\right)}$, $\delta^{\left(n\right)}$,
and $\eta^{\left(n\right)}$ (derived respectively from raw, conditional,
and partial moments of the $\textrm{Gumbel}\left(0,1\right)$ probability
distribution). Investigating $\gamma^{\left(n\right)}=\left(-1\right)^{n}\Gamma^{\left(n\right)}\left(1\right),\:n\geq1$
through $\textrm{Gumbel}\left(0,1\right)$ generating functions, we
find that $\gamma^{\left(2n\right)}\in\mathbb{Q}\left[\gamma,\gamma^{\left(2\right)},\gamma^{\left(3\right)},\ldots,\gamma^{\left(2n-1\right)}\right]$
for $n\geq2$ and $\gamma^{\left(n\right)}$ is transcendental infinitely
often. We then show, via a theorem of Shidlovskii, that the $\eta^{\left(n\right)}$
are algebraically independent, and therefore transcendental, for all
$n\geq0$, implying that at least one element of each pair, $\left\{ \gamma^{\left(n\right)},\delta^{\left(n\right)}/e\right\} $
and $\left\{ \gamma^{\left(n\right)},\delta^{\left(n\right)}\right\} $,
and at least two elements of the triple $\left\{ \gamma^{\left(n\right)},\delta^{\left(n\right)}/e,\delta^{\left(n\right)}\right\} $
are transcendental for all $n\geq1$. Further analysis of the $\gamma^{\left(n\right)}$
and $\eta^{\left(n\right)}$ reveals that both the $\delta^{\left(n\right)}/e$
and $\delta^{\left(n\right)}$ are transcendental infinitely often
with lower asymptotic densities of at least $1/2$. Finally, we provide
parallel results for the sequences $\widetilde{\delta}^{\left(n\right)}$
and $\widetilde{\eta}^{\left(n\right)}$ satisfying the ``non-alternating
analogue'' equation $\gamma^{\left(n\right)}+\widetilde{\delta}^{\left(n\right)}/e=\widetilde{\eta}^{\left(n\right)}$.
\end{abstract}

\keywords{Euler-Mascheroni constant; Euler-Gompertz constant; Gumbel distribution;
transcendence; algebraic independence; Gamma-function derivatives.}
\subjclass[2000]{11J81, 60E05}
\maketitle

\section{Introduction}

\subsection{Three Fundamental Sequences}

\begin{singlespace}
\noindent\phantom{}\medskip{}

\noindent Let $\gamma=0.577215\ldots$ denote the Euler-Mascheroni
constant and $\delta=0.596347\ldots$ denote the Euler-Gompertz constant.
Although neither $\gamma$ nor $\delta$ has been shown to be irrational,
Aptekarev {[}1{]} was the first to note their disjunctive irrationality;
that is, at least one of the two numbers must be irrational. Interestingly,
Aptekarev's observation in {[}1{]} was based on earlier work of Shidlovskii
{[}2{]} and Mahler {[}3{]} that implied, but did not explicitly state,
the disjunctive irrationality of $\left\{ \gamma,\delta\right\} $.
More recently, Rivoal {[}4{]} strengthened this result to show the
disjunctive transcendence of $\left\{ \gamma,\delta\right\} $. Lagarias
{[}5{]} provides a comprehensive treatment of results involving these
two constants.
\end{singlespace}

\begin{singlespace}
The two constants are related by the intriguing equation
\begin{equation}
\delta=-e\left[\gamma-{\displaystyle \sum_{k=1}^{\infty}\dfrac{\left(-1\right)^{k+1}}{k\cdot k!}}\right],
\end{equation}
introduced by Hardy {[}6{]}, which can be rewritten as
\begin{equation}
\gamma+\dfrac{\delta}{e}=-{\displaystyle \sum_{k=1}^{\infty}\dfrac{\left(-1\right)^{k}}{k\cdot k!}}=\textrm{Ein}\left(1\right),
\end{equation}
where $\textrm{Ein}\left(z\right)=-{\textstyle \sum_{k=1}^{\infty}\left(-z\right)^{k}/\left(k\cdot k!\right)}$
is the entire exponential-integral function. Both (1) and (2) follow
immediately from setting $z=-1$ in the power-series expansion of
the exponential-integral function (using the principal branch for
$z\in\mathbb{R}_{<0}$), $\textrm{Ei}\left(z\right)=\gamma+\ln\left|z\right|+{\textstyle \sum_{k=1}^{\infty}}z^{k}/\left(k\cdot k!\right)$,
and recognizing that $\delta=-e\textrm{Ei}\left(-1\right)$. Although
it is known that $\gamma+\delta/e$ is transcendental, this fact is
infrequently mentioned in the literature. Aptekarev {[}1{]} and Lagarias
{[}5{]} (Section 3.16, p. 608) attributed the observation that $1-1/e$
and $-\left(\gamma+\delta/e\right)$ are algebraically independent,
and therefore transcendental, to a more general result asserted (without
proof) by Mahler {[}3{]}.\footnote{We note that Aptekarev {[}1{]} contains a typographical error, referring
to $-\left(\gamma-\delta/e\right)$ instead of $-\left(\gamma+\delta/e\right)$.} 

Recently, we provided the following probabilistic interpretation of
(2):\footnote{See Powers {[}7{]}.}
\begin{equation}
\textrm{E}_{X}\left[X\right]-\textrm{E}_{X}\left[X\mid X\leq0\right]\Pr\left\{ X\leq0\right\} =\textrm{E}_{X}\left[X^{+}\right],
\end{equation}
where $X\sim\textrm{Gumbel}\left(0,1\right)$ (with cumulative distribution
function $F_{X}\left(x\right)=\exp\left(-e^{-x}\right),\:x\in\mathbb{R}$)
and $X^{+}=\max\left\{ X,0\right\} $, so that
\[
\gamma=\textrm{E}_{X}\left[X\right]={\displaystyle \int_{-\infty}^{\infty}}x\exp\left(-x-e^{-x}\right)dx,
\]
\[
\delta=-\textrm{E}_{X}\left[X\mid X\leq0\right]=-e{\displaystyle \int_{-\infty}^{0}}x\exp\left(-x-e^{-x}\right)dx,
\]
\[
\dfrac{1}{e}=\Pr\left\{ X\leq0\right\} ={\displaystyle \int_{-\infty}^{0}}\exp\left(-x-e^{-x}\right)dx,
\]
and
\[
-\sum_{k=1}^{\infty}\dfrac{\left(-1\right)^{k}}{k\cdot k!}=\textrm{E}_{X}\left[X^{+}\right]={\displaystyle \int_{0}^{\infty}}x\exp\left(-x-e^{-x}\right)dx.
\]

The identity in (3) suggests the natural generalization
\begin{equation}
\textrm{E}_{X}\left[X^{n}\right]-\textrm{E}_{X}\left[X^{n}\mid X\leq0\right]\Pr\left\{ X\leq0\right\} =\textrm{E}_{X}\left[\left(X^{+}\right)^{n}\right],\quad n\in\mathbb{Z}_{\geq0}
\end{equation}
(with $\left(X^{+}\right)^{0}\coloneqq\mathbf{1}_{\left\{ X>0\right\} }$),
which can be expressed in the manner of (2) as
\begin{equation}
\gamma^{\left(n\right)}+\dfrac{\delta^{\left(n\right)}}{e}=-n!{\displaystyle \sum_{k=1}^{\infty}\dfrac{\left(-1\right)^{k}}{k^{n}\cdot k!}},
\end{equation}
where
\[
\gamma^{\left(n\right)}=\textrm{E}_{X}\left[X^{n}\right]={\displaystyle \int_{-\infty}^{\infty}}x^{n}\exp\left(-x-e^{-x}\right)dx,
\]
\begin{equation}
\delta^{\left(n\right)}=-\textrm{E}_{X}\left[X^{n}\mid X\leq0\right]=-e{\displaystyle \int_{-\infty}^{0}}x^{n}\exp\left(-x-e^{-x}\right)dx,
\end{equation}
and
\begin{equation}
-n!{\displaystyle \sum_{k=1}^{\infty}\dfrac{\left(-1\right)^{k}}{k^{n}\cdot k!}}=\textrm{E}_{X}\left[\left(X^{+}\right)^{n}\right]={\displaystyle \int_{0}^{\infty}}x^{n}\exp\left(-x-e^{-x}\right)dx.
\end{equation}

For ease of exposition, we will set
\begin{equation}
\eta^{\left(n\right)}=-n!{\displaystyle \sum_{k=1}^{\infty}\dfrac{\left(-1\right)^{k}}{k^{n}\cdot k!}}
\end{equation}
so that (5) may be rewritten as
\begin{equation}
\gamma^{\left(n\right)}+\dfrac{\delta^{\left(n\right)}}{e}=\eta^{\left(n\right)},
\end{equation}
and note that
\[
\gamma^{\left(n\right)}=\left(-1\right)^{n}\Gamma^{\left(n\right)}\left(1\right).
\]
Further, we will refer to the sequences $\gamma^{\left(n\right)}$,
$\delta^{\left(n\right)}$, and $\eta^{\left(n\right)}$, for $n\in\mathbb{Z}_{\geq0}$,
as the \emph{generalized Euler-Mascheroni}, \emph{Euler-Gompertz},
and \emph{Eta} constants, respectively (with $\gamma^{\left(1\right)}=\gamma$,
$\delta^{\left(1\right)}=\delta=-e\textrm{Ei}\left(-1\right)$, and
$\eta^{\left(1\right)}=\eta=\textrm{Ein}\left(1\right)$ denoting
the \emph{ordinary} Euler-Mascheroni, Euler-Gompertz, and Eta constants).
Note that the generalized Euler-Mascheroni constants, $\gamma^{\left(n\right)}$,
are indexed via superscripts to distinguish them from the well-known
sequence of Stieltjes constants, commonly denoted by $\gamma_{n}$.
Furthermore, the generalized Eta constants, $\eta^{\left(n\right)}$,
should not be confused with the eta functions of Dirichlet, Dedekind,
or Weierstrass.
\end{singlespace}

\subsection{Asymptotic Behavior}

\begin{singlespace}
\noindent\phantom{}\medskip{}

\noindent Table 1 presents the first 16 values of the generalized
Euler-Mascheroni, Euler-Gompertz, and Eta constants. As is clear from
a brief inspection, both the $\gamma^{\left(n\right)}$ and $\eta^{\left(n\right)}$
increase approximately factorially, whereas the $\delta^{\left(n\right)}$,
which alternate in sign, grow super-exponentially but sub-factorially
in magnitude.
\end{singlespace}

\begin{singlespace}
The following proposition quantifies the asymptotic behavior underlying
these observations.
\end{singlespace}
\begin{prop}
\begin{singlespace}
As $n\rightarrow\infty$:
\end{singlespace}

\begin{singlespace}
\noindent (i) $\gamma^{\left(n\right)}=n!\left(1-1/2^{n+1}+O\left(1/3^{n}\right)\right)$;

\noindent (ii) $\left|\delta^{\left(n\right)}\right|=e\left[W\left(n\right)\right]^{n}\exp\left(-n/W\left(n\right)\right)\sqrt{2\pi n/\left(W\left(n\right)+1\right)}\left(1+o\left(1\right)\right)$,
where $W\left(n\right)$ denotes the principal branch of the Lambert
W function for $n>0$; and

\noindent (iii) $\eta^{\left(n\right)}=n!\left(1-1/2^{n+1}+O\left(1/3^{n}\right)\right)$.
\end{singlespace}
\end{prop}
\begin{proof}
\phantom{}\medskip{}

\begin{singlespace}
\noindent See the Appendix.
\end{singlespace}
\end{proof}
\begin{singlespace}
\begin{center}
Table 1. Values of $\gamma^{\left(n\right)}$, $\delta^{\left(n\right)}$,
and $\eta^{\left(n\right)}$ for $n\in\left\{ 0,1,\ldots,15\right\} $
\par\end{center}

\begin{center}
\begin{tabular}{|c|c|c|c|}
\hline 
{\footnotesize$\boldsymbol{n}$} & {\footnotesize$\boldsymbol{\gamma^{\left(n\right)}}$} & {\footnotesize$\boldsymbol{\delta^{\left(n\right)}}$} & {\footnotesize$\boldsymbol{\eta^{\left(n\right)}}$}\tabularnewline
\hline 
\hline 
{\footnotesize$0$} & {\footnotesize$1$} & {\footnotesize$-1$} & {\footnotesize$0.6321205588\ldots$}\tabularnewline
\hline 
{\footnotesize$1$} & {\footnotesize$0.5772156649\ldots$} & {\footnotesize$0.5963473623\ldots$} & {\footnotesize$0.7965995992\ldots$}\tabularnewline
\hline 
{\footnotesize$2$} & {\footnotesize$1.9781119906\ldots$} & {\footnotesize$-0.5319307700\ldots$} & {\footnotesize$1.7824255962\ldots$}\tabularnewline
\hline 
{\footnotesize$3$} & {\footnotesize$5.4448744564\ldots$} & {\footnotesize$0.5806819508\ldots$} & {\footnotesize$5.6584954080\ldots$}\tabularnewline
\hline 
{\footnotesize$4$} & {\footnotesize$23.5614740840\ldots$} & {\footnotesize$-0.7222515339\ldots$} & {\footnotesize$23.2957725933\ldots$}\tabularnewline
\hline 
{\footnotesize$5$} & {\footnotesize$117.8394082683\ldots$} & {\footnotesize$0.9875880596\ldots$} & {\footnotesize$118.2027216118\ldots$}\tabularnewline
\hline 
{\footnotesize$6$} & {\footnotesize$715.0673625273\ldots$} & {\footnotesize$-1.4535032853\ldots$} & {\footnotesize$714.5326485509\ldots$}\tabularnewline
\hline 
{\footnotesize$7$} & {\footnotesize$5,019.8488726298\ldots$} & {\footnotesize$2.2708839827\ldots$} & {\footnotesize$5,020.6842841603\ldots$}\tabularnewline
\hline 
{\footnotesize$8$} & {\footnotesize$40,243.6215733357\ldots$} & {\footnotesize$-3.7298791058\ldots$} & {\footnotesize$40,242.2494274946\ldots$}\tabularnewline
\hline 
{\footnotesize$9$} & {\footnotesize$362,526.2891146549\ldots$} & {\footnotesize$6.3945118625\ldots$} & {\footnotesize$362,528.6415241055\ldots$}\tabularnewline
\hline 
{\footnotesize$10$} & {\footnotesize$3,627,042.4127568947\ldots$} & {\footnotesize$-11.3803468877\ldots$} & {\footnotesize$3,627,038.2261612413\ldots$}\tabularnewline
\hline 
{\footnotesize$11$} & {\footnotesize$39,907,084.1514313358\ldots$} & {\footnotesize$20.9346984188\ldots$} & {\footnotesize$39,907,091.8528764912\ldots$}\tabularnewline
\hline 
{\footnotesize$12$} & {\footnotesize$478,943,291.7651829432\ldots$} & {\footnotesize$-39.6671864816\ldots$} & {\footnotesize$478,943,277.1724405475\ldots$}\tabularnewline
\hline 
{\footnotesize$13$} & {\footnotesize$6,226,641,351.5460642549\ldots$} & {\footnotesize$77.1984745660\ldots$} & {\footnotesize$6,226,641,379.9457959376\ldots$}\tabularnewline
\hline 
{\footnotesize$14$} & {\footnotesize$87,175,633,810.7084156319\ldots$} & {\footnotesize$-153.9437943882\ldots$} & {\footnotesize$87,175,633,754.0756585806\ldots$}\tabularnewline
\hline 
{\footnotesize$15$} & {\footnotesize$1,307,654,429,495.7941762096\ldots$} & {\footnotesize$313.9164765016\ldots$} & {\footnotesize$1,307,654,429,611.2775941595\ldots$}\tabularnewline
\hline 
\end{tabular}\medskip{}
\par\end{center}
\end{singlespace}

\subsection{Outline of Results}

\begin{singlespace}
\noindent\phantom{}\medskip{}

\noindent In Section 2, we begin our study of the arithmetic properties
of the three sequences of generalized constants by investigating $\gamma^{\left(n\right)}=\left(-1\right)^{n}\Gamma^{\left(n\right)}\left(1\right)$
for $n\in\mathbb{Z}_{\geq1}$ through generating functions of the
$\textrm{Gumbel}\left(0,1\right)$ distribution and various derivative
identities. This analysis reveals that $\gamma^{\left(2n\right)}\in\mathbb{Q}\left[\gamma,\gamma^{\left(2\right)},\gamma^{\left(3\right)},\ldots,\gamma^{\left(2n-1\right)}\right]$
for $n\in\mathbb{Z}_{\geq2}$ and that $\gamma^{\left(n\right)}$
is transcendental infinitely often with the density of transcendental
values among $n\in\left\{ 1,2,\ldots,N\right\} $ bounded below by
$\beta\left(N\right)=\max\left\{ 0,\sqrt{N-1}-3/2\right\} /N$, where
$\beta\left(N\right)\asymp N^{-1/2}$ as $N\rightarrow\infty$. In
Section 3, we employ a theorem of Shidlovskii (1989) to demonstrate
the algebraic independence, and therefore transcendence, of the $\eta^{\left(n\right)}$
for all $n\in\mathbb{Z}_{\geq0}$. This implies that at least one
element of each pair, $\left\{ \gamma^{\left(n\right)},\delta^{\left(n\right)}/e\right\} $
and $\left\{ \gamma^{\left(n\right)},\delta^{\left(n\right)}\right\} $,
and at least two elements of the triple $\left\{ \gamma^{\left(n\right)},\delta^{\left(n\right)}/e,\delta^{\left(n\right)}\right\} $
are transcendental for all $n\in\mathbb{Z}_{\geq1}$. Moreover, further
analysis of the $\gamma^{\left(n\right)}$ and $\eta^{\left(n\right)}$
shows that both the $\delta^{\left(n\right)}/e$ and $\delta^{\left(n\right)}$
are transcendental infinitely often, with lower asymptotic densities
of at least $1/2$. Finally, Section 4 addresses two additional sequences,
$\widetilde{\delta}^{\left(n\right)}$ and $\widetilde{\eta}^{\left(n\right)}$,
satisfying the ``non-alternating analogue'' equation $\gamma^{\left(n\right)}+\widetilde{\delta}^{\left(n\right)}/e=\widetilde{\eta}^{\left(n\right)}$,
and provides results parallel to those for the original system.
\end{singlespace}

\section{Transcendence Properties of the $\gamma^{\left(n\right)}$}

\subsection{Generating Functions and Identities}

\begin{singlespace}
\noindent\phantom{}\medskip{}

\noindent For $X\sim\textrm{Gumbel}\left(0,1\right)$, it is well
known that:
\end{singlespace}

(I) the moment-generating function, $\textrm{M}_{X}\left(t\right),\:\left|t\right|<1$,
is given by
\[
\textrm{E}_{X}\left[e^{tX}\right]=\Gamma\left(1-t\right)
\]
\begin{equation}
={\displaystyle \sum_{i=0}^{\infty}}\textrm{E}_{X}\left[X^{i}\right]\dfrac{t^{i}}{i!}=1+{\displaystyle \sum_{i=1}^{\infty}}\gamma^{\left(i\right)}\dfrac{t^{i}}{i!};
\end{equation}
and

(II) the associated cumulant-generating function, $\textrm{K}_{X}\left(t\right),\:\left|t\right|<1$,
is given by
\[
\ln\left(\textrm{E}_{X}\left[e^{tX}\right]\right)=\ln\left(\Gamma\left(1-t\right)\right)
\]
\begin{equation}
={\displaystyle \sum_{j=1}^{\infty}}\kappa_{j}\dfrac{t^{j}}{j!}=\gamma t+{\displaystyle \sum_{j=2}^{\infty}}\zeta\left(j\right)\dfrac{t^{j}}{j},
\end{equation}
where
\[
\kappa_{j}=\begin{cases}
\gamma, & j=1\\
\left(j-1\right)!\zeta\left(j\right), & j\in\mathbb{Z}_{\geq2}
\end{cases}
\]

\begin{singlespace}
\noindent denotes the $j^{\textrm{th}}$ cumulant.
\end{singlespace}

\begin{singlespace}
Setting
\begin{equation}
\gamma t+{\displaystyle \sum_{j=2}^{\infty}}\zeta\left(j\right)\dfrac{t^{j}}{j}=\ln\left(1+{\displaystyle \sum_{i=1}^{\infty}}\gamma^{\left(i\right)}\dfrac{t^{i}}{i!}\right)
\end{equation}
from (10) and (11), one can differentiate both sides of (12) with
respect to $t$ to obtain
\[
\gamma+{\displaystyle \sum_{j=2}^{\infty}}\zeta\left(j\right)t^{j-1}=\dfrac{{\displaystyle \sum_{i=1}^{\infty}}\gamma^{\left(i\right)}\dfrac{t^{i-1}}{\left(i-1\right)!}}{1+{\displaystyle \sum_{i=1}^{\infty}}\gamma^{\left(i\right)}\dfrac{t^{i}}{i!}}
\]
\[
\Longrightarrow\left(\gamma+{\displaystyle \sum_{j=2}^{\infty}}\zeta\left(j\right)t^{j-1}\right)\left(1+{\displaystyle \sum_{i=1}^{\infty}}\gamma^{\left(i\right)}\dfrac{t^{i}}{i!}\right)={\displaystyle \sum_{i=1}^{\infty}}\gamma^{\left(i\right)}\dfrac{t^{i-1}}{\left(i-1\right)!}
\]
\[
\Longrightarrow\gamma+\gamma{\displaystyle \sum_{i=1}^{\infty}}\gamma^{\left(i\right)}\dfrac{t^{i}}{i!}+{\displaystyle \sum_{j=2}^{\infty}}\zeta\left(j\right)t^{j-1}+\left({\displaystyle \sum_{j=2}^{\infty}}\zeta\left(j\right)t^{j-1}\right)\left({\displaystyle \sum_{i=1}^{\infty}}\gamma^{\left(i\right)}\dfrac{t^{i}}{i!}\right)={\displaystyle \sum_{i=1}^{\infty}}\gamma^{\left(i\right)}\dfrac{t^{i-1}}{\left(i-1\right)!}
\]
\begin{equation}
\Longrightarrow\gamma+\gamma{\displaystyle \sum_{m=1}^{\infty}}\gamma^{\left(m\right)}\dfrac{t^{m}}{m!}+{\displaystyle \sum_{m=1}^{\infty}}\zeta\left(m+1\right)t^{m}+{\displaystyle \sum_{m=2}^{\infty}}\left[{\displaystyle \sum_{i=1}^{m-1}}\dfrac{\zeta\left(m-i+1\right)\gamma^{\left(i\right)}}{i!}\right]t^{m}={\displaystyle \sum_{m=0}^{\infty}}\dfrac{\gamma^{\left(m+1\right)}}{m!}t^{m}.
\end{equation}
Then, matching coefficients of $t^{m}$ from both sides of (13) yields
\[
\begin{array}{c}
\gamma=\gamma^{\left(1\right)},\quad\textrm{for }m=0\\
\dfrac{\gamma\cdot\gamma^{\left(m\right)}}{m!}+\zeta\left(m+1\right)+{\displaystyle \sum_{i=1}^{m-1}}\dfrac{\zeta\left(m-i+1\right)\gamma^{\left(i\right)}}{i!}=\dfrac{\gamma^{\left(m+1\right)}}{m!},\quad\textrm{for }m\in\mathbb{Z}_{\geq1},
\end{array}
\]
which, by suitable changes of the index, can be rearranged to provide
the two distinct recurrence identities
\begin{equation}
\gamma^{\left(n\right)}=\gamma\cdot\gamma^{\left(n-1\right)}+{\displaystyle \sum_{i=0}^{n-2}}\dfrac{\left(n-1\right)!}{i!}\zeta\left(n-i\right)\gamma^{\left(i\right)},\quad n\in\mathbb{Z}_{\geq1}
\end{equation}
and
\begin{equation}
\left(n-1\right)!\zeta\left(n\right)=\gamma^{\left(n\right)}-\gamma\cdot\gamma^{\left(n-1\right)}-{\displaystyle \sum_{i=1}^{n-2}}\dfrac{\left(n-1\right)!}{\left(n-i-1\right)!}\zeta\left(i+1\right)\gamma^{\left(n-i-1\right)},\quad n\in\mathbb{Z}_{\geq2},
\end{equation}
respectively.

Next, we iterate (14) to find
\begin{equation}
\begin{array}{c}
\gamma^{\left(1\right)}=\gamma,\\
\gamma^{\left(2\right)}=\gamma^{2}+\zeta\left(2\right),\\
\gamma^{\left(3\right)}=\gamma^{3}+3\zeta\left(2\right)\gamma+2\zeta\left(3\right),\\
\gamma^{\left(4\right)}=\gamma^{4}+6\zeta\left(2\right)\gamma^{2}+8\zeta\left(3\right)\gamma+\dfrac{27}{2}\zeta\left(4\right),\\
\cdots,
\end{array}
\end{equation}
and solve (15) recursively to obtain\newpage\foreignlanguage{american}{
\begin{equation}
\begin{array}{c}
\zeta\left(2\right)=\gamma^{\left(2\right)}-\gamma^{2},\\
\zeta\left(3\right)=\dfrac{1}{2}\left(\gamma^{\left(3\right)}-3\gamma^{\left(2\right)}\gamma+2\gamma^{3}\right),\\
\zeta\left(4\right)=\dfrac{1}{6}\left[\gamma^{\left(4\right)}-4\gamma^{\left(3\right)}\gamma-3\left(\gamma^{\left(2\right)}\right)^{2}+12\gamma^{\left(2\right)}\gamma^{2}-6\gamma^{4}\right],\\
\zeta\left(5\right)=\dfrac{1}{24}\left[\gamma^{\left(5\right)}-5\gamma^{\left(4\right)}\gamma-10\gamma^{\left(3\right)}\gamma^{\left(2\right)}+20\gamma^{\left(3\right)}\gamma^{2}+30\left(\gamma^{\left(2\right)}\right)^{2}\gamma-60\gamma^{\left(2\right)}\gamma^{3}+24\gamma^{5}\right],\\
\cdots.
\end{array}
\end{equation}
}Then, substituting $6\zeta\left(2\right)$ for $\pi^{2}$ in the
even-zeta identity
\[
\zeta\left(2n\right)=\dfrac{\left(-1\right)^{n+1}\mathscr{B}_{2n}}{2\left(2n\right)!}\left(2\pi\right)^{2n}
\]
(where $\mathscr{B}_{2n}$ denotes the $\left(2n\right)^{\textrm{th}}$
Bernoulli number) gives
\begin{equation}
\zeta\left(2n\right)=\dfrac{\left(-1\right)^{n+1}2^{3n-1}3^{n}\mathscr{B}_{2n}}{\left(2n\right)!}\left(\zeta\left(2\right)\right)^{n}=\dfrac{\left(-1\right)^{n+1}2^{3n-1}3^{n}\mathscr{B}_{2n}}{\left(2n\right)!}\left(\gamma^{\left(2\right)}-\gamma^{2}\right)^{n},
\end{equation}
and one can set (18) equal to the expression for $\zeta\left(2n\right)$
generated by (17) for any $n\in\mathbb{Z}_{\geq2}$. After clearing
denominators, this yields a polynomial equation of the form,
\begin{equation}
P_{n}\left(\gamma,\gamma^{\left(2\right)},\gamma^{\left(3\right)},\ldots,\gamma^{\left(2n\right)}\right)=\alpha_{n,0}\gamma^{\left(2n\right)}-\alpha_{n,1}\gamma^{\left(2n-1\right)}\gamma-\sum_{i=0}^{2n}\left[\sum_{j=1}^{J_{i}}\alpha_{n,i,j}{\displaystyle \prod_{k=2}^{2n-2}}\left(\gamma^{\left(k\right)}\right)^{p_{n,k,i,j}}\right]\gamma^{i}=0
\end{equation}
for some $J_{i}\in\mathbb{Z}_{\geq0}$, $\alpha_{n,0},\alpha_{n,1}\in\mathbb{Z}_{\neq0}$,
$\alpha_{n,i,j}\in\mathbb{Z}$, and $p_{n,k,i,j}\in\mathbb{Z}_{\geq0}$.
In particular, for $n=2$ and $3$ we have\foreignlanguage{american}{
\[
P_{2}\left(\gamma,\gamma^{\left(2\right)},\gamma^{\left(3\right)},\gamma^{\left(4\right)}\right)=5\gamma^{\left(4\right)}-20\gamma^{\left(3\right)}\gamma-27\left(\gamma^{\left(2\right)}\right)^{2}+84\gamma^{\left(2\right)}\gamma^{2}-42\gamma^{4}=0
\]
and}
\end{singlespace}

\selectlanguage{american}%
\begin{singlespace}
\[
\begin{array}{c}
P_{3}\left(\gamma,\gamma^{\left(2\right)},\gamma^{\left(3\right)},\gamma^{\left(4\right)},\gamma^{\left(5\right)},\gamma^{\left(6\right)}\right)=7\gamma^{\left(6\right)}-42\gamma^{\left(5\right)}\gamma+210\gamma^{\left(4\right)}\gamma^{2}-105\gamma^{\left(4\right)}\gamma^{\left(2\right)}-70\left(\gamma^{\left(3\right)}\right)^{2}\\
+840\gamma^{\left(3\right)}\gamma^{\left(2\right)}\gamma-840\gamma^{\left(3\right)}\gamma^{3}+18\left(\gamma^{\left(2\right)}\right)^{3}-1314\left(\gamma^{\left(2\right)}\right)^{2}\gamma^{2}+1944\gamma^{\left(2\right)}\gamma^{4}-648\gamma^{6}=0,
\end{array}
\]
respectively.

\selectlanguage{english}%
By replacing all components of Euler's reflection formula by their
Taylor-series expansions, we further find
\[
\Gamma\left(1+t\right)\Gamma\left(1-t\right)=\dfrac{\pi t}{\sin\left(\pi t\right)}
\]
\[
\Longrightarrow\left({\displaystyle \sum_{\ell=0}^{\infty}}\gamma^{\left(\ell\right)}\dfrac{\left(-t\right)^{\ell}}{\ell!}\right)\left({\displaystyle \sum_{i=0}^{\infty}}\gamma^{\left(i\right)}\dfrac{t^{i}}{i!}\right)={\displaystyle \sum_{j=0}^{\infty}}c_{j}\pi^{2j}t^{2j}
\]
\begin{equation}
\Longrightarrow{\displaystyle \sum_{k=0}^{\infty}}\left[{\displaystyle \sum_{\ell=0}^{2k}}\left(-1\right)^{\ell}\dfrac{\gamma^{\left(\ell\right)}\gamma^{\left(2k-\ell\right)}}{\ell!\left(2k-\ell\right)!}\right]t^{2k}={\displaystyle \sum_{k=0}^{\infty}}c_{k}\pi^{2k}t^{2k},
\end{equation}
where $c_{k}\in\mathbb{Q}^{\times}$ for all $k\in\mathbb{Z}_{\geq0}$.
Then, matching coefficients of $t^{2k}$ from both sides of (20) implies
the identity
\begin{equation}
{\displaystyle \sum_{j=0}^{2k}}\dfrac{\left(-1\right)^{j}\gamma^{\left(j\right)}\gamma^{\left(2k-j\right)}}{j!\left(2k-j\right)!}=c_{k}\pi^{2k}.
\end{equation}

\end{singlespace}

\subsection{Transcendence Results}

\begin{singlespace}
\noindent\phantom{}\medskip{}

\noindent The following theorem, based on the analysis of Section
2.1, provides the main results of Section 2.
\end{singlespace}
\selectlanguage{american}%
\begin{thm}
For the sequence \foreignlanguage{english}{$\left\{ \gamma^{\left(n\right)}\right\} _{n\geq1}$:}

\selectlanguage{english}%
\begin{singlespace}
\noindent (i) at least one element of the pair $\left\{ \gamma,\gamma^{\left(2\right)}\right\} $
is transcendental;

\noindent (ii) $\gamma^{\left(2n\right)}\in\mathbb{Q}\left[\gamma,\gamma^{\left(2\right)},\gamma^{\left(3\right)},\ldots,\gamma^{\left(2n-1\right)}\right]$
for all $n\in\mathbb{Z}_{\geq2}$;

\noindent (iii) $\gamma^{\left(n\right)}$ is transcendental infinitely
often; and

\noindent (iv) $\#\left\{ n\in\left\{ 1,2,\ldots,N\right\} :\gamma^{\left(n\right)}\textrm{ is transcendental}\right\} /N\geq\beta\left(N\right)=\max\left\{ 0,\sqrt{N-1}-3/2\right\} /N$
for all $N\in\mathbb{Z}_{\geq1}$, where $\beta\left(N\right)\asymp N^{-1/2}$
as $N\rightarrow\infty$.
\end{singlespace}
\end{thm}
\selectlanguage{english}%
\begin{proof}
\phantom{}\medskip{}

\begin{singlespace}
\noindent (i) From the second line of (16), we see that $\gamma^{\left(2\right)}=\gamma^{2}+\zeta\left(2\right)$.
Since $\zeta\left(2\right)=\pi^{2}/6$ is transcendental, it follows
immediately that if either $\gamma$ or $\gamma^{\left(2\right)}$
is algebraic, then the other member of the pair must be transcendental.

\noindent (ii) Rewriting the polynomial equation of (19), we find
\[
\gamma^{\left(2n\right)}=\dfrac{\alpha_{n,1}}{\alpha_{n,0}}\gamma^{\left(2n-1\right)}\gamma+\sum_{i=0}^{2n}\left[\sum_{j=1}^{J_{i}}\dfrac{\alpha_{n,i,j}}{\alpha_{n,0}}{\displaystyle \prod_{k=2}^{2n-2}}\left(\gamma^{\left(k\right)}\right)^{p_{n,k,i,j}}\right]\gamma^{i},
\]
where $J_{i}\in\mathbb{Z}_{\geq0}$, $\alpha_{n,0},\alpha_{n,1}\in\mathbb{Z}_{\neq0}$,
$\alpha_{n,i,j}\in\mathbb{Z}$, and $p_{n,k,i,j}\in\mathbb{Z}_{\geq0}$.
Thus, $\gamma^{\left(2n\right)}$ can be expressed as a polynomial
with rational coefficients in the variables $\gamma,\gamma^{\left(2\right)},\gamma^{\left(3\right)},\ldots,\gamma^{\left(2n-1\right)}$
for all $n\in\mathbb{Z}_{\geq2}$.
\end{singlespace}

\begin{singlespace}
\noindent (iii) Suppose (for purposes of contradiction) that only
finitely many $\gamma^{\left(n\right)}$ are transcendental, and choose
$n^{*}\in\mathbb{Z}_{\geq1}$ such that $\gamma^{\left(n\right)}\in\overline{\mathbb{Q}}$
for all $n>n^{*}$, with $\mathcal{W}=\textrm{span}_{\overline{\mathbb{Q}}}\left\{ \gamma^{\left(0\right)},\gamma,\gamma^{\left(2\right)},\gamma^{\left(3\right)},\ldots,\gamma^{\left(n^{*}\right)}\right\} $
denoting a finite-dimensional $\overline{\mathbb{Q}}$-vector space.
Now note that for any $k>n^{*}$, each summand on the left-hand side
of (21) has at least one algebraic factor $\gamma^{\left(n_{k}\right)}$,
for some $n_{k}>n^{*}$ (because $j+\left(2k-j\right)=2k>2n^{*}\Longrightarrow\left(j>n^{*}\right)\vee\left(2k-j>n^{*}\right)$).
It then follows that every summand lies in $\mathcal{W}$, and therefore
that $c_{k}\pi^{2k}\in\mathcal{W}$ for all $k>n^{*}$. Since $c_{k}\in\mathbb{Q}^{\times}$,
we see that $\pi^{2k}\in\mathcal{W}$ for all $k>n^{*}$, and thus
that all $n^{*}+2$ elements $\left\{ \pi^{2\left(n^{*}+1\right)},\pi^{2\left(n^{*}+2\right)},\ldots,\pi^{2\left(2n^{*}+2\right)}\right\} $
must lie in the space $\mathcal{W}$. However, the dimension of $\mathcal{W}$
is at most $n^{*}+1$, forcing a non-trivial $\overline{\mathbb{Q}}$-linear
dependence among these $n^{*}+2$ elements; that is, a non-trivial
polynomial relation over $\overline{\mathbb{Q}}$ satisfied by $\pi^{2}$,
which contradicts the transcendence of $\pi$.

\noindent (iv) Fixing $N\in\mathbb{Z}_{\geq1}$, consider the set
of terms $\left\{ c_{k}\pi^{2k}\right\} $ for $k\in\left\{ 0,1,\ldots,\left\lfloor N/2\right\rfloor \right\} $
given by the right-hand side of (21). From the left-hand side of the
same equation, we know that each $c_{k}\pi^{2k}$ is a $\overline{\mathbb{Q}}$-linear
combination of products $\gamma^{\left(u\right)}\gamma^{\left(v\right)}$,
with $0\leq u,v\le N$. Now assume that the number of transcendental
elements $\gamma^{\left(n\right)}$, for $n\in\left\{ 1,2,\ldots,N\right\} $,
is given by $\tau$ for some integer $\tau\geq0$, and let $\theta_{1},\theta_{2},\dots,\theta_{\tau^{*}}$
denote the $\tau^{*}\leq\tau$ distinct transcendental numbers within
the set $\left\{ \gamma,\gamma^{\left(2\right)},\gamma^{\left(3\right)},\ldots,\gamma^{\left(N\right)}\right\} $,
so that every element of this set lies in $\overline{\mathbb{Q}}\cup\left\{ \theta_{1},\theta_{2},\dots,\theta_{\tau^{*}}\right\} $.
It then follows that every product $\gamma^{\left(u\right)}\gamma^{\left(v\right)}$
(with $0\leq u,v\le N$) is contained in the $\overline{\mathbb{Q}}$-vector
space\foreignlanguage{american}{
\[
\mathcal{U}_{\tau^{*}}=\textrm{span}_{\overline{\mathbb{Q}}}\left\{ 1,\theta_{1},\theta_{2},\dots,\theta_{\tau^{*}},\theta_{i}\theta_{i^{\prime}}:1\leq i\leq i^{\prime}\leq\tau^{*}\right\} ,
\]
implying} $c_{k}\pi^{2k}\in\mathcal{U}_{\tau^{*}}$.
\end{singlespace}

Since the generating set for $\mathcal{U}_{\tau^{*}}$ has cardinality
\[
1+\tau^{*}+\frac{\tau^{*}\left(\tau^{*}+1\right)}{2}=\frac{\left(\tau^{*}+1\right)\left(\tau^{*}+2\right)}{2}\leq\frac{\left(\tau+1\right)\left(\tau+2\right)}{2},
\]
one can see that 
\[
\dim_{\overline{\mathbb{Q}}}\left(\mathcal{U}_{\tau^{*}}\right)\le\frac{\left(\tau+1\right)\left(\tau+2\right)}{2}.
\]
However, we also know from above that the set $\left\{ c_{0},c_{1}\pi^{2},\dots,c_{\left\lfloor N/2\right\rfloor }\pi^{2\left\lfloor N/2\right\rfloor }\right\} $
are $\overline{\mathbb{Q}}$-linearly independent, forcing the span
of these elements to have dimension $\left\lfloor N/2\right\rfloor +1$.
Given that this span is contained in $\mathcal{U}_{\tau^{*}}$, it
follows that
\[
\left\lfloor N/2\right\rfloor +1\le\dim_{\overline{\mathbb{Q}}}\left(\mathcal{U}_{\tau^{*}}\right)\le\frac{\left(\tau+1\right)\left(\tau+2\right)}{2}
\]
\[
\Longrightarrow2\left\lfloor N/2\right\rfloor +2\le\left(\tau+1\right)\left(\tau+2\right)
\]
\[
\Longrightarrow\tau\ge\sqrt{2\left\lfloor N/2\right\rfloor +9/4}-3/2.
\]
Thus,
\[
\tau\ge\left\lceil \sqrt{2\left\lfloor N/2\right\rfloor +9/4}-3/2\right\rceil \geq\max\left\{ 0,\sqrt{N-1}-3/2\right\} 
\]
\[
\Longrightarrow\dfrac{\tau}{N}\geq\beta\left(N\right)=\dfrac{\max\left\{ 0,\sqrt{N-1}-3/2\right\} }{N},
\]
where $\beta\left(N\right)\asymp N^{-1/2}$ as $N\rightarrow\infty$.
\end{proof}
\begin{singlespace}
\newpage The identity $\gamma^{\left(n\right)}=\left(-1\right)^{n}\Gamma^{\left(n\right)}\left(1\right)$
implies that all four parts of Theorem 1 are equally valid if phrased
in terms of Gamma-function derivatives rather than generalized Euler-Mascheroni
constants. In particular, $\Gamma^{\left(n\right)}\left(1\right)$
is transcendental infinitely often, a result consistent with the common
expectation that $\Gamma^{\left(n\right)}\left(1\right)$ is transcendental
for all $n\geq1$ (see, e.g., Rivoal {[}8{]} and Fischler and Rivoal
{[}9{]}).\footnote{It may be of interest to note that $\beta\left(N\right)$ is larger
in magnitude than the asymptotic lower bound for the density of irrational
$\zeta\left(n\right)$ among $n\in\left\{ 3,5,\ldots,N\right\} $
given by Fischler, Sprang, and Zudilin {[}10{]}:
\[
\beta_{\zeta}\left(N\right)\asymp\dfrac{2^{\left(1-\varepsilon\right)\ln\left(N\right)/\ln\left(\ln\left(N\right)\right)}}{\left(N-1\right)/2}\asymp N^{\left(1-\varepsilon\right)\ln\left(2\right)/\ln\left(\ln\left(N\right)\right)-1},
\]
for arbitrarily small $\varepsilon>0$.}
\end{singlespace}

\section{Algebraic Independence of the $\eta^{\left(n\right)}$}

\noindent Although the generalized Eta constants do not satisfy convenient
identities such as those derived for the generalized Euler-Mascheroni
constants in Section 2.1, they actually admit more powerful methods
for the study of their arithmetic properties. Specifically, we can
embed $\eta^{\left(n\right)}=-n!{\textstyle \sum_{k=1}^{\infty}}{\displaystyle \left(-1\right)^{k}/\left(k^{n}\cdot k!\right)}$
into the sequence of functions
\begin{equation}
F_{n}\left(z\right)=-n!{\displaystyle \sum_{k=1}^{\infty}}\dfrac{z^{k}}{k^{n}\cdot k!},\quad n\in\mathbb{Z}_{\geq0},z\in\mathbb{C},
\end{equation}
which are essentially constant multiples of the polylogarithm-exponential
series first studied by Hardy {[}11{]} and later used extensively
by Shidlovskii {[}12{]} in his analysis of $E$-functions. For the
purposes at hand, we note that these functions are readily matched
to the system $w_{n}\left(z\right)=1+{\textstyle \sum_{k=1}^{\infty}}z^{k}/\left(k^{n}\cdot k!\right)$
of Shidlovskii {[}12{]} (Chapter 7, Section 1) via the simple linear
identity
\[
F_{n}\left(z\right)\equiv n!\left(1-w_{n}\left(z\right)\right).
\]

This allows us to make two assertions:

\begin{singlespace}
(I) The $F_{n}\left(z\right)$, like the $w_{n}\left(z\right)$, are
$E$-functions (in the sense of Siegel), each of which satisfies an
$\left(n+1\right)^{\textrm{st}}$-order non-homogeneous linear differential
equation with coefficients in $\mathbb{Q}\left(z\right)$ and a unique
finite singularity at $z=0$ (see Chapter 7, Section 1 of Shidlovskii
{[}12{]}).

(II) For any $m\in\mathbb{Z}_{\geq1}$ and $z\in\overline{\mathbb{Q}}^{\times}$,
the set of values $\left\{ F_{0}\left(z\right),F_{1}\left(z\right),\ldots,F_{m}\left(z\right)\right\} $,
like the set of values $\left\{ w_{0}\left(z\right),w_{1}\left(z\right),\ldots,w_{m}\left(z\right)\right\} $,
are algebraically independent over $\overline{\mathbb{Q}}$ (see Chapter
7, Section 1, Theorem 1 of Shidlovskii {[}12{]}; the theorem is restated
and proved in Chapter 8, Section 3 of the same volume).\footnote{These assertions do not explicitly mention that the functions $F_{0}\left(z\right),F_{1}\left(z\right),\ldots,F_{m}\left(z\right)$
are themselves algebraically independent over $\overline{\mathbb{Q}}\left(z\right)$
-- a common step in the conventional Siegel-Shidlovskii framework
(see, e.g., Beukers {[}13{]}). However, the present formulation is
faithful to the indicated theorem of Shidlovskii {[}12{]}, which is
tailored for the specific family of $E$-functions considered.}

As an immediate consequence of assertions (I) and (II), we obtain
the following theorem.
\end{singlespace}
\begin{thm}
The set of values $\left\{ \eta^{\left(n\right)}\right\} _{n\geq0}$
are algebraically independent over $\overline{\mathbb{Q}}$, implying
that $\eta^{\left(n\right)}$ is transcendental for all $n\in\mathbb{Z}_{\geq0}$.
\end{thm}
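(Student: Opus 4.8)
The plan is to read off Theorem 2 as a direct consequence of assertions (I) and (II) established above, with only two small bookkeeping points needing care. First I would observe that, comparing the defining series (7) for $\eta^{\left(n\right)}$ with the defining series (19) for $F_{n}\left(t\right)$, one has
\[
\eta^{\left(n\right)}=F_{n}\left(-1\right),\qquad n\in\mathbb{Z}_{\geq0},
\]
and that $-1\in\overline{\mathbb{Q}}^{\times}$. By assertion (I) the only finite singularity of the underlying differential system is $t=0$, so $t=-1$ is a regular algebraic point at which assertion (II) is applicable.

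Next I would invoke assertion (II) with $t=-1$: for every $m\in\mathbb{Z}_{\geq1}$, the values $\left\{ F_{0}\left(-1\right),F_{1}\left(-1\right),\ldots,F_{m}\left(-1\right)\right\} =\left\{ \eta^{\left(0\right)},\eta^{\left(1\right)},\ldots,\eta^{\left(m\right)}\right\} $ are algebraically independent over $\overline{\mathbb{Q}}$. To pass from these initial segments to the full sequence I would use the standard fact that a (possibly infinite) family is algebraically independent over a field exactly when every one of its finite subfamilies is; since any finite subset of $\left\{ \eta^{\left(n\right)}\right\} _{n\geq0}$ is contained in some initial segment $\left\{ \eta^{\left(0\right)},\ldots,\eta^{\left(m\right)}\right\} $ with $m\geq1$, the algebraic independence of all these segments forces the algebraic independence of $\left\{ \eta^{\left(n\right)}\right\} _{n\geq0}$ over $\overline{\mathbb{Q}}$.

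Finally, transcendence of each $\eta^{\left(n\right)}$ follows because a single number is algebraically independent over $\overline{\mathbb{Q}}$ precisely when it is transcendental: for $n\geq1$ the algebraic independence of the pair $\left\{ \eta^{\left(0\right)},\eta^{\left(n\right)}\right\} $ already yields transcendence of $\eta^{\left(n\right)}$, and for $n=0$ one uses the pair $\left\{ \eta^{\left(0\right)},\eta^{\left(1\right)}\right\} $, corresponding to the smallest admissible value $m=1$ in assertion (II). No genuine obstacle arises beyond invoking Shidlovskii's theorem correctly; the only delicate points are the reduction from the infinite sequence to its finite subfamilies and the $m\geq1$ restriction, which is exactly why the base case $n=0$ must be treated by pairing $\eta^{\left(0\right)}$ with $\eta^{\left(1\right)}$ rather than in isolation.
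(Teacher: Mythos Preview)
Your proposal is correct and follows essentially the same approach as the paper's proof: set $t=-1$ in assertion (II) to obtain algebraic independence of each initial segment $\{\eta^{(0)},\ldots,\eta^{(m)}\}$, then pass to the full sequence via the finite-subfamily characterization of algebraic independence. Your version is slightly more detailed (you make explicit why $t=-1$ is admissible and you spell out how the $n=0$ case is covered under the $m\geq1$ restriction), but these are refinements of presentation rather than differences of method.
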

\begin{proof}
\phantom{} \medskip{}

\begin{singlespace}
\noindent This result follows directly from the above-mentioned Theorem
1 of Shidlovskii {[}12{]}. First, set $z=-1$ in assertion (II) above
to show that, for any $m\in\mathbb{Z}_{\geq1}$, the set of values
$\left\{ \eta^{\left(0\right)},\eta^{\left(1\right)},\ldots,\eta^{\left(m\right)}\right\} $
are algebraically independent over $\overline{\mathbb{Q}}$. Next,
note that the infinite set $\left\{ \eta^{\left(n\right)}\right\} _{n\geq0}$
are algebraically independent if and only if the subsets $\left\{ \eta^{\left(0\right)},\eta^{\left(1\right)},\ldots,\eta^{\left(m\right)}\right\} $
are algebraically independent for all finite $m$.
\end{singlespace}
\end{proof}
\begin{singlespace}
As indicated above, Theorem 2 is a direct consequence of Shidlovskii's
theorem. However, we have been unable to find an application of this
theorem to the generalized positive partial $\textrm{Gumbel}\left(0,1\right)$
moments of (7) in the literature. The following corollary shows the
impact of the transcendence of the generalized Eta constants on the
corresponding generalized Euler-Mascheroni and Euler-Gompertz constants.
\end{singlespace}
\begin{cor}
For all $n\in\mathbb{Z}_{\geq1}$:

\begin{singlespace}
\noindent (i) at least one element of the pair $\left\{ \gamma^{\left(n\right)},\delta^{\left(n\right)}/e\right\} $
is transcendental;

\noindent (ii) at least one element of the pair $\left\{ \gamma^{\left(n\right)},\delta^{\left(n\right)}\right\} $
is transcendental; and

\noindent (iii) at least two elements of the triple $\left\{ \gamma^{\left(n\right)},\delta^{\left(n\right)}/e,\delta^{\left(n\right)}\right\} $
are transcendental.
\end{singlespace}
\end{cor}
\begin{proof}
\phantom{} \medskip{}

\begin{singlespace}
\noindent (i) Theorem 2 states that $\eta^{\left(n\right)}$ is transcendental
for all $n\in\mathbb{Z}_{\geq0}$. Therefore, it is clear from (9)
that the two numbers $\gamma^{\left(n\right)}$ and $\delta^{\left(n\right)}/e$
cannot both be algebraic. (We omit the case of $n=0$ from the statement
of part (i) because it is obvious that $\delta^{\left(0\right)}/e=-1/e$
is transcendental.)

\noindent (ii) From Theorem 2, we know that $\eta^{\left(n\right)}=\gamma^{\left(n\right)}+\delta^{\left(n\right)}/e$
is algebraically independent of $\eta^{\left(0\right)}=1-1/e$ for
all $n\in\mathbb{Z}_{\geq1}$. Therefore, if we assume (for purposes
of contradiction) that both $\gamma^{\left(n\right)}$ and $\delta^{\left(n\right)}$
are algebraic, then we can write 
\[
\eta^{\left(n\right)}=\gamma^{\left(n\right)}+\delta^{\left(n\right)}\left(1-\eta^{\left(0\right)}\right)
\]
\[
\Longleftrightarrow\eta^{\left(n\right)}-\left(\gamma^{\left(n\right)}+\delta^{\left(n\right)}\right)+\delta^{\left(n\right)}\eta^{\left(0\right)}=0
\]
\[
\Longleftrightarrow a\eta^{\left(n\right)}+b\eta^{\left(0\right)}+c=0,
\]
where $a,b,c\in\overline{\mathbb{Q}}$. This contradicts the algebraic
independence of $\eta^{\left(n\right)}$ and $\eta^{\left(0\right)}$,
forcing at least one of $\left\{ \gamma^{\left(n\right)},\delta^{\left(n\right)}\right\} $
to be transcendental. (We note that this result is implied by Section
7 of Rivoal {[}4{]}, where the author expands on earlier work of Mahler
{[}3{]}. Rivoal addresses the special case of $n=1$ -- mentioned
in Section 1.1 of the present article -- in Section 1 of {[}4{]}.)

\noindent (iii) Consider the pair $\left\{ \delta^{\left(n\right)}/e,\delta^{\left(n\right)}\right\} $.
Since $e$ is transcendental and $\delta^{\left(n\right)}\neq0$,
one can see that if either $\delta^{\left(n\right)}/e$ or $\delta^{\left(n\right)}$
is algebraic, then the other member of the pair must be transcendental.
Combining this disjunctive transcendence of $\left\{ \delta^{\left(n\right)}/e,\delta^{\left(n\right)}\right\} $
with the disjunctive transcendence of both pairs $\left\{ \gamma^{\left(n\right)},\delta^{\left(n\right)}/e\right\} $
and $\left\{ \gamma^{\left(n\right)},\delta^{\left(n\right)}\right\} $
(from parts (i) and (ii), respectively), it is easy to see that no
two elements of the triple $\left\{ \gamma^{\left(n\right)},\delta^{\left(n\right)}/e,\delta^{\left(n\right)}\right\} $
can both be algebraic.
\end{singlespace}
\end{proof}
\begin{singlespace}
For $n=1$, the disjunctive transcendence of $\left\{ \gamma,\delta/e\right\} $
complements the disjunctive transcendence of $\left\{ \gamma,\delta\right\} $
proved by Rivoal {[}4{]}. This result also draws attention to the
fact that, under the probabilistic interpretation of Hardy's equation,
the constant $\delta/e=\textrm{E}_{X}\left[X^{-}\right]=\textrm{E}_{X}\left[\max\left\{ -X,0\right\} \right]$
(for $X\sim\textrm{Gumbel}\left(0,1\right)$) is a more natural companion
of $\gamma=\textrm{E}_{X}\left[X\right]$ than is $\delta=-\textrm{E}_{X}\left[X\mid X\leq0\right]$.
In Powers {[}14{]}, we investigated the analytic relationship between
$\gamma$ and $\delta$ in (2) by considering linear combinations
of the form $\gamma+\alpha\delta$ for $\alpha\in\mathbb{R}^{\times}$.
Letting
\[
S_{\gamma}\coloneqq{\displaystyle \sum_{k=1}^{\infty}\dfrac{\left(-1\right)^{k}\left(!k\right)}{k}}
\]
and
\[
S_{\delta}\coloneqq{\displaystyle \sum_{k=1}^{\infty}\left(-1\right)^{k-1}\left(k-1\right)!}
\]
denote canonical Borel-summable divergent series for $\gamma$ and
$\delta$,\footnote{The divergent series $S_{\delta}$ is well-known for having been included
in the first letter of Srinivasa Ramanujan to G. H. Hardy (see Berndt
and Rankin {[}15{]}).} respectively (where $!k=k!{\textstyle \sum_{\ell=0}^{k}}\left(-1\right)^{\ell}/\ell!$
denotes the $k^{\textrm{th}}$ derangement number), it was found that
$\alpha=1/e$ is the unique coefficient such that the series $S_{\gamma}+\alpha S_{\delta}$
converges conventionally. This is because the Borel-transform kernels
of both $S_{\gamma}$ and $S_{\delta}$ are characterized by unique
logarithmic singularities at $-1$ with associated Stokes constants
(for the logarithmic-coefficient normalization at the singularity)
of $-1/e$ and $1$, respectively, thus forcing the divergent terms
to cancel when combined as $S_{\gamma}+S_{\delta}/e$.
\end{singlespace}

The following theorem relies on both our earlier analysis of the algebraic
structure of the polynomials $P_{n}$ (see Section 2.1) and the algebraic-independence
result of Theorem 2.
\selectlanguage{american}%
\begin{thm}
For the sequences \foreignlanguage{english}{$\left\{ \delta^{\left(n\right)}/e\right\} _{n\geq1}$
and $\left\{ \delta^{\left(n\right)}\right\} _{n\geq1}$:}

\selectlanguage{english}%
\begin{singlespace}
\noindent (i) $\delta^{\left(n\right)}/e$ is transcendental infinitely
often;

\noindent (ii) $\underset{N\rightarrow\infty}{\liminf}\left(\#\left\{ n\in\left\{ 1,2,\ldots,N\right\} :\delta^{\left(n\right)}/e\textrm{ is transcendental}\right\} /N\right)\geq1/2$;

\noindent (iii) $\delta^{\left(n\right)}$ is transcendental infinitely
often; and

\noindent (iv) $\underset{N\rightarrow\infty}{\liminf}\left(\#\left\{ n\in\left\{ 1,2,\ldots,N\right\} :\delta^{\left(n\right)}\textrm{ is transcendental}\right\} /N\right)\geq1/2$.\newpage{}
\end{singlespace}
\end{thm}
\selectlanguage{english}%
\begin{proof}
\phantom{}\medskip{}

\begin{singlespace}
\noindent (i) Define $\mathcal{G}_{n}=\left\{ \gamma,\gamma^{\left(2\right)},\gamma^{\left(3\right)},\ldots,\gamma^{\left(n\right)}\right\} $,
$\mathcal{H}_{n}=\left\{ \eta,\eta^{\left(2\right)},\eta^{\left(3\right)},\ldots,\eta^{\left(n\right)}\right\} $,
and $\mathcal{D}_{n}=\left\{ \delta/e,\delta^{\left(2\right)}/e,\delta^{\left(3\right)}/e,\ldots,\delta^{\left(n\right)}/e\right\} $
for $n\in\mathbb{Z}_{\geq1}$, and let $\mathcal{F}_{n}^{\left(\mathcal{G}\right)}=\mathbb{Q}\left(\mathcal{G}_{n}\right)$,
$\mathcal{F}_{n}^{\left(\mathcal{H}\right)}=\mathbb{Q}\left(\mathcal{H}_{n}\right)$,
and $\mathcal{F}_{n}^{\left(\mathcal{D}\right)}=\mathbb{Q}\left(\mathcal{D}_{n}\right)$
denote the respective fields generated by these three sets of numbers
over $\mathbb{Q}$. We wish to demonstrate that
\begin{equation}
\mathrm{trdeg}\left(\mathcal{F}_{n}^{\left(\mathcal{G}\right)}\right)\leq n-\lfloor n/2\rfloor+1
\end{equation}
for all $n\in\mathbb{Z}_{\geq1}$, and begin by noting that this inequality
holds for $n<4$ because $\mathcal{F}_{n}^{\left(\mathcal{G}\right)}$
is generated by at most $|\mathcal{G}_{n}|$ elements, and $|\mathcal{G}_{n}|=n\le n-\lfloor n/2\rfloor+1$
for $n\in\left\{ 1,2,3\right\} $. In the following analysis, we treat
the case of $n\geq4$.
\end{singlespace}

From (19), one can see that for any $m\in\left\{ 2,3,\ldots,n\right\} $,
$P_{m}$ is linear in the highest-index constant, $\gamma^{\left(2m\right)}$,
with a non-zero integer coefficient $\alpha_{m,0}$. This implies
that, as stated in Theorem 1(ii), each $\gamma^{\left(2m\right)}$
can be expressed as a polynomial with rational coefficients in strictly
lower-index constants; that is, $\gamma^{\left(2m\right)}\in\mathbb{Q}\left[\gamma,\gamma^{\left(2\right)},\gamma^{\left(3\right)},\ldots,\gamma^{\left(2m-1\right)}\right]$.
We now let
\[
\mathcal{B}_{n}=\mathcal{G}_{n}\setminus\left\{ \gamma^{\left(2m\right)}:2\leq m\leq\lfloor n/2\rfloor\right\} 
\]
and show by induction on $m$ that $\gamma^{\left(2m\right)}\in\mathbb{Q}\left(\mathcal{B}_{n}\right)$
for all $m\in\left\{ 2,3,\ldots,\lfloor n/2\rfloor\right\} $.

For $m=2$, the relation $P_{2}=0$ expresses $\gamma^{\left(4\right)}$
as a polynomial with rational coefficients in $\gamma,\gamma^{\left(2\right)},\gamma^{\left(3\right)}\in\mathcal{B}_{n}$,
implying $\gamma^{\left(4\right)}\in\mathbb{Q}\left(\mathcal{B}_{n}\right)$.
Now consider $m\ge3$, and assume that $\gamma^{\left(4\right)},\gamma^{\left(6\right)},\ldots,\gamma^{\left(2m-2\right)}\in\mathbb{Q}\left(\mathcal{B}_{n}\right)$.
Since (a) $P_{m}=0$ expresses $\gamma^{\left(2m\right)}$ as a polynomial
with rational coefficients in $\gamma,\gamma^{\left(2\right)},\gamma^{\left(3\right)},\ldots,\gamma^{\left(2m-1\right)}$,
(b) every odd-indexed element of the set $\left\{ \gamma^{\left(3\right)},\gamma^{\left(5\right)},\ldots,\gamma^{\left(2m-1\right)}\right\} $
lies in $\mathcal{B}_{n}$, and (c) every even-indexed element of
$\left\{ \gamma^{\left(4\right)},\gamma^{\left(6\right)},\ldots,\gamma^{\left(2m-2\right)}\right\} $
lies in $\mathbb{Q}\left(\mathcal{B}_{n}\right)$, it follows that
$\gamma^{\left(2m\right)}\in\mathbb{Q}\left(\mathcal{B}_{n}\right)$
as well. This confirms that $\gamma^{\left(2m\right)}\in\mathbb{Q}\left(\mathcal{B}_{n}\right)$
for all $m\in\left\{ 2,3,\ldots,\lfloor n/2\rfloor\right\} $, and
thus that $\mathcal{F}_{n}^{\left(\mathcal{G}\right)}=\mathbb{Q}\left(\mathcal{G}_{n}\right)=\mathbb{Q}\left(\mathcal{B}_{n}\right)$.

Given that $\mathcal{B}_{n}$ contains $n-\lfloor n/2\rfloor+1$ elements,
the transcendence degree of $\mathcal{F}_{n}^{\left(\mathcal{G}\right)}$
(which is generated by $\mathcal{B}_{n}$) satisfies (23). Then, from
the relation in (9), one can see that 
\[
\mathcal{F}_{n}^{\left(\mathcal{H}\right)}\subseteq\mathcal{F}_{n}^{\left(\mathcal{G}\right)}\mathcal{F}_{n}^{\left(\mathcal{D}\right)},
\]
implying 
\begin{equation}
\mathrm{trdeg}\left(\mathcal{F}_{n}^{\left(\mathcal{H}\right)}\right)\leq\mathrm{trdeg}\left(\mathcal{F}_{n}^{\left(\mathcal{G}\right)}\mathcal{F}_{n}^{\left(\mathcal{D}\right)}\right)\leq\mathrm{trdeg}\left(\mathcal{F}_{n}^{\left(\mathcal{G}\right)}\right)+\mathrm{trdeg}\left(\mathcal{F}_{n}^{\left(\mathcal{D}\right)}\right).
\end{equation}
Since the values in $\mathcal{H}_{n}$ are algebraically independent
by Theorem 2, we know that $\mathrm{trdeg}\left(\mathcal{F}_{n}^{\left(\mathcal{H}\right)}\right)=n$,
which (in conjunction with (23) and (24)) allows us to write
\[
n\leq n-\lfloor n/2\rfloor+1+\mathrm{trdeg}\left(\mathcal{F}_{n}^{\left(\mathcal{D}\right)}\right)
\]
\[
\Longleftrightarrow\mathrm{trdeg}\left(\mathcal{F}_{n}^{\left(\mathcal{D}\right)}\right)\geq\lfloor n/2\rfloor-1.
\]
Finally, since the transcendence degree of the field generated by
a set of numbers provides a lower bound on the number of transcendental
elements in the indicated set, and $\underset{n\rightarrow\infty}{\lim}\left(\lfloor n/2\rfloor-1\right)=\infty$,
it follows that the sequence $\left\{ \delta^{\left(n\right)}/e\right\} _{n\geq1}$
must contain an infinite number of transcendental terms.

\noindent (ii) The density of transcendental $\delta^{\left(n\right)}/e$
among $n\in\left\{ 1,2,\ldots,N\right\} $ satisfies
\[
\dfrac{\#\left\{ n\in\left\{ 1,2,\ldots,N\right\} :\delta^{\left(n\right)}/e\textrm{ is transcendental}\right\} }{N}\geq\frac{\mathrm{trdeg}\left(\mathcal{F}_{N}^{\left(\mathcal{D}\right)}\right)}{N}\geq\dfrac{\lfloor N/2\rfloor-1}{N}.
\]

\noindent Thus,
\[
\underset{N\rightarrow\infty}{\liminf}\left(\dfrac{\#\left\{ n\in\left\{ 1,2,\ldots,N\right\} :\delta^{\left(n\right)}/e\textrm{ is transcendental}\right\} }{N}\right)\geq\underset{N\rightarrow\infty}{\liminf}\left(\dfrac{\lfloor N/2\rfloor-1}{N}\right)\geq\dfrac{1}{2}.
\]

\begin{singlespace}
\noindent (iii) Retain $\mathcal{G}_{n}$ and $\mathcal{F}_{n}^{\left(\mathcal{G}\right)}$
as defined in the proof of part (i), and recall from Theorem 2 that
$\eta^{\left(n\right)}=\gamma^{\left(n\right)}+\delta^{\left(n\right)}/e$
is algebraically independent of $\eta^{\left(0\right)}=1-1/e$ for
all $n\in\mathbb{Z}_{\geq1}$. Writing
\begin{equation}
\eta^{\left(n\right)}=\gamma^{\left(n\right)}+\delta^{\left(n\right)}\left(1-\eta^{\left(0\right)}\right)
\end{equation}
\newpage{}

\noindent as in the proof of Corollary 1(ii), further define $\mathcal{H}_{n}^{\prime}=\left\{ \eta^{\left(0\right)},\eta,\eta^{\left(2\right)},\eta^{\left(3\right)},\ldots,\eta^{\left(n\right)}\right\} $
and\linebreak{}
$\mathcal{D}_{n}^{\prime}=\left\{ \delta,\delta^{\left(2\right)},\delta^{\left(3\right)},\ldots,\delta^{\left(n\right)}\right\} $
for $n\in\mathbb{Z}_{\geq1}$, where $\mathcal{F}_{n}^{\left(\mathcal{H}^{\prime}\right)}=\mathbb{Q}\left(\mathcal{H}_{n}^{\prime}\right)$
and $\mathcal{F}_{n}^{\left(\mathcal{D}^{\prime}\right)}=\mathbb{Q}\left(\mathcal{D}_{n}^{\prime}\right)$
denote the corresponding fields over $\mathbb{Q}$. From (25), we
know that
\[
\mathcal{F}_{n}^{\left(\mathcal{H}^{\prime}\right)}\subseteq\mathbb{Q}\left(\eta^{\left(0\right)},\mathcal{G}_{n}\right)\mathcal{F}_{n}^{\left(\mathcal{D}^{\prime}\right)},
\]
which implies 
\[
\mathrm{trdeg}\left(\mathcal{F}_{n}^{\left(\mathcal{H}^{\prime}\right)}\right)\leq\mathrm{trdeg}\left(\mathbb{Q}\left(\eta^{\left(0\right)},\mathcal{G}_{n}\right)\mathcal{F}_{n}^{\left(\mathcal{D}^{\prime}\right)}\right)\leq\mathrm{trdeg}\left(\mathbb{Q}\left(\eta^{\left(0\right)},\mathcal{G}_{n}\right)\right)+\mathrm{trdeg}\left(\mathcal{F}_{n}^{\left(\mathcal{D}^{\prime}\right)}\right).
\]
Then, since adjoining a single element to a field can increase the
transcendence degree by at most one, we have 
\[
\mathrm{trdeg}\left(\mathbb{Q}\left(\eta^{\left(0\right)},\mathcal{G}_{n}\right)\right)=\mathrm{trdeg}\left(\mathcal{F}_{n}^{\left(\mathcal{G}\right)}\left(\eta^{\left(0\right)}\right)\right)\leq\mathrm{trdeg}\left(\mathcal{F}_{n}^{\left(\mathcal{G}\right)}\right)+1\leq n-\lfloor n/2\rfloor+2,
\]
where the final inequality follows from (23). Moreover, we know from
Theorem 2 that the values in $\mathcal{H}_{n}^{\prime}$ are algebraically
independent, implying $\mathrm{trdeg}\left(\mathcal{F}_{n}^{\left(\mathcal{H}^{\prime}\right)}\right)=|\mathcal{H}_{n}^{\prime}|=n+1$
and therefore
\[
n+1\leq n-\lfloor n/2\rfloor+2+\mathrm{trdeg}\left(\mathcal{F}_{n}^{\left(\mathcal{D}^{\prime}\right)}\right)
\]
\[
\Longleftrightarrow\mathrm{trdeg}\left(\mathcal{F}_{n}^{\left(\mathcal{D}^{\prime}\right)}\right)\geq\lfloor n/2\rfloor-1.
\]
Since $\underset{n\rightarrow\infty}{\lim}\left(\lfloor n/2\rfloor-1\right)=\infty$,
it follows that the sequence $\left\{ \delta^{\left(n\right)}\right\} _{n\geq1}$
must contain an infinite number of transcendental terms.
\end{singlespace}

\begin{singlespace}
\noindent (iv) The density of transcendental $\delta^{\left(n\right)}$
among $n\in\left\{ 1,2,\ldots,N\right\} $ satisfies 
\[
\dfrac{\#\left\{ n\in\left\{ 1,2,\ldots,N\right\} :\delta^{\left(n\right)}\textrm{ is transcendental}\right\} }{N}\geq\frac{\mathrm{trdeg}\left(\mathcal{F}_{N}^{\left(\mathcal{D}^{\prime}\right)}\right)}{N}\geq\dfrac{\lfloor N/2\rfloor-1}{N}.
\]
Thus,
\[
\underset{N\rightarrow\infty}{\liminf}\left(\dfrac{\#\left\{ n\in\left\{ 1,2,\ldots,N\right\} :\delta^{\left(n\right)}\textrm{ is transcendental}\right\} }{N}\right)\geq\underset{N\rightarrow\infty}{\liminf}\left(\dfrac{\lfloor N/2\rfloor-1}{N}\right)\geq\dfrac{1}{2}.
\]
\end{singlespace}
\end{proof}

\section{Non-Alternating Analogues}

\subsection{Two Additional Sequences}

\begin{singlespace}
\noindent\phantom{}\medskip{}

\noindent Replacing $\left(-1\right)^{k}$ by $\left(1\right)^{k}=1$
in the numerator of the series in (8) gives the ``non-alternating
analogue'' of the sequence $\eta^{\left(n\right)}$,
\begin{equation}
\widetilde{\eta}^{\left(n\right)}=-n!{\displaystyle \sum_{k=1}^{\infty}\dfrac{1}{k^{n}\cdot k!}}.
\end{equation}
We then define the corresponding analogue of the sequence $\delta^{\left(n\right)}$
implicitly through the following natural counterpart to (9),
\begin{equation}
\gamma^{\left(n\right)}+\dfrac{\widetilde{\delta}^{\left(n\right)}}{e}=\widetilde{\eta}^{\left(n\right)},
\end{equation}

\noindent where the $\widetilde{\delta}^{\left(n\right)}$ and $\widetilde{\eta}^{\left(n\right)}$,
for $n\in\mathbb{Z}_{\geq0}$, will be called the \emph{generalized
non-alternating Euler-Gompertz} and \emph{non-alternating Eta} constants,
respectively (with $\widetilde{\delta}^{\left(1\right)}=\widetilde{\delta}=-e\textrm{Ei}\left(1\right)$
and $\widetilde{\eta}^{\left(1\right)}=\widetilde{\eta}=\textrm{Ein}\left(-1\right)$
denoting the \emph{ordinary} non-alternating Euler-Gompertz and non-alternating
Eta constants). As a counterpart of (2), we thus have
\begin{equation}
\gamma+\dfrac{\widetilde{\delta}}{e}=-{\displaystyle \sum_{k=1}^{\infty}\dfrac{1}{k\cdot k!}}=\textrm{Ein}\left(-1\right),
\end{equation}
which is shown to be transcendental by Theorem 4 of Section 4.3.\newpage{}
\end{singlespace}

\subsection{Asymptotic Behavior}

\begin{singlespace}
\noindent\phantom{}\medskip{}

\noindent Table 2 provides the first 16 values of the generalized
non-alternating Euler-Gompertz and non-alternating Eta constants.
From its third column, we can see that the (consistently negative)
$\widetilde{\eta}^{\left(n\right)}$ values are comparable in absolute
magnitude to the corresponding $\eta^{\left(n\right)}$ of Table 1,
and thus increase approximately factorially. However, the (consistently
negative) $\widetilde{\delta}^{\left(n\right)}$ values in the second
column are quite different in magnitude from the $\delta^{\left(n\right)}$
of Table 1 because they, like the $\widetilde{\eta}^{\left(n\right)}$,
grow approximately factorially as well.
\end{singlespace}

\begin{center}
Table 2. Values of $\widetilde{\delta}^{\left(n\right)}$ and $\widetilde{\eta}^{\left(n\right)}$
for $n\in\left\{ 0,1,\ldots,15\right\} $
\par\end{center}

\begin{singlespace}
\begin{center}
\begin{tabular}{|c|c|c|}
\hline 
{\footnotesize$\boldsymbol{n}$} & {\footnotesize$\boldsymbol{\widetilde{\delta}^{\left(n\right)}}$} & {\footnotesize$\boldsymbol{\widetilde{\eta}^{\left(n\right)}}$}\tabularnewline
\hline 
\hline 
{\footnotesize$0$} & {\footnotesize$-7.3890560989\ldots$} & {\footnotesize$-1.7182818284\ldots$}\tabularnewline
\hline 
{\footnotesize$1$} & {\footnotesize$-5.1514643229\ldots$} & {\footnotesize$-1.3179021514\ldots$}\tabularnewline
\hline 
{\footnotesize$2$} & {\footnotesize$-11.6100810692\ldots$} & {\footnotesize$-2.2929981450\ldots$}\tabularnewline
\hline 
{\footnotesize$3$} & {\footnotesize$-32.2422478186\ldots$} & {\footnotesize$-6.4163856531\ldots$}\tabularnewline
\hline 
{\footnotesize$4$} & {\footnotesize$-131.4700021171\ldots$} & {\footnotesize$-24.8036368256\ldots$}\tabularnewline
\hline 
{\footnotesize$5$} & {\footnotesize$-651.8492520019\ldots$} & {\footnotesize$-121.9625302861\ldots$}\tabularnewline
\hline 
{\footnotesize$6$} & {\footnotesize$-3,916.6763381264\ldots$} & {\footnotesize$-725.7973399920\ldots$}\tabularnewline
\hline 
{\footnotesize$7$} & {\footnotesize$-27,400.1009939265\ldots$} & {\footnotesize$-5,060.0849690569\ldots$}\tabularnewline
\hline 
{\footnotesize$8$} & {\footnotesize$-219,211.5495238585\ldots$} & {\footnotesize$-40,399.8007638272\ldots$}\tabularnewline
\hline 
{\footnotesize$9$} & {\footnotesize$-1,972,830.5386794810\ldots$} & {\footnotesize$-363,237.5069807080\ldots$}\tabularnewline
\hline 
{\footnotesize$10$} & {\footnotesize$-19,728,269.2785592814\ldots$} & {\footnotesize$-3,630,582.2647192272\ldots$}\tabularnewline
\hline 
{\footnotesize$11$} & {\footnotesize$-217,010,407.4543390336\ldots$} & {\footnotesize$-39,926,583.2712579089\ldots$}\tabularnewline
\hline 
{\footnotesize$12$} & {\footnotesize$-2,604,123,546.6077724786\ldots$} & {\footnotesize$-479,060,223.3022788289\ldots$}\tabularnewline
\hline 
{\footnotesize$13$} & {\footnotesize$-33,853,598,434.1585762780\ldots$} & {\footnotesize$-6,227,401,522.0546076014\ldots$}\tabularnewline
\hline 
{\footnotesize$14$} & {\footnotesize$-473,950,346,279.9399036233\ldots$} & {\footnotesize$-87,180,954,721.7674533138\ldots$}\tabularnewline
\hline 
{\footnotesize$15$} & {\footnotesize$-7,109,255,026,416.1913290094\ldots$} & {\footnotesize$-1,307,694,336,767.4617097988\ldots$}\tabularnewline
\hline 
\end{tabular}\medskip{}
\par\end{center}
\end{singlespace}

\begin{singlespace}
The following proposition quantifies the asymptotic behavior of both
sequences.
\end{singlespace}
\begin{prop}
As $n\rightarrow\infty$:

\begin{singlespace}
\noindent (i) $\widetilde{\delta}^{\left(n\right)}=-n!\left(2e+e/3^{n+1}+O\left(1/5^{n}\right)\right)$;
and

\noindent (ii) $\widetilde{\eta}^{\left(n\right)}=-n!\left(1+1/2^{n+1}+O\left(1/3^{n}\right)\right)$.
\end{singlespace}
\end{prop}
\begin{proof}
\phantom{}\medskip{}

\begin{singlespace}
\noindent See the Appendix.
\end{singlespace}
\end{proof}

\subsection{Transcendence Results}

\begin{singlespace}
\noindent\phantom{}\medskip{}

\noindent Despite the clear resemblance of (27) to (9), the former
relation does not admit a moment-decomposition interpretation analogous
to (4) because the ``positive partial moment'' component, $\widetilde{\eta}^{\left(n\right)}$,
is strictly negative. Moreover, for the case of $n=1$, there does
not appear to be anything particularly salient about the coefficient
$1/e$ in the linear combination (28) as there was for this coefficient
in (2). Thus, the similarity of (28) to (2) arises primarily from
the status of $\widetilde{\eta}^{\left(n\right)}$ as a non-alternating
analogue of $\eta^{\left(n\right)}$, with $\widetilde{\delta}^{\left(n\right)}$
emerging implicitly by constructing (27) to mimic (9).
\end{singlespace}

\begin{singlespace}
Nevertheless, it is quite straightforward to obtain results analogous
to Theorem 2, Corollary 1, and Theorem 3, as shown below.
\end{singlespace}
\begin{thm}
The set of values $\left\{ \widetilde{\eta}^{\left(n\right)}\right\} _{n\geq0}$
are algebraically independent over $\overline{\mathbb{Q}}$, implying
that $\widetilde{\eta}^{\left(n\right)}$ is transcendental for all
$n\in\mathbb{Z}_{\geq0}$.\newpage{}
\end{thm}
\begin{proof}
\phantom{}\medskip{}

\begin{singlespace}
\noindent The proof of this result is essentially the same as that
of Theorem 2 after setting $z=1$ (rather than $z=-1$) in assertion
(II) of Section 3.\footnote{For $n=1$, the present theorem proves the transcendence of $\gamma+\widetilde{\delta}/e=\textrm{Ein}\left(-1\right)$,
as noted previously. For this same value of $n$, one can prove the
transcendence of $\textrm{Ein}\left(z\right)$ for any $z\in\overline{\mathbb{Q}}^{\times}$
simply by replacing $z$ by $-z$ in assertion (II) of Section 3.} 
\end{singlespace}
\end{proof}
\begin{cor}
For all $n\in\mathbb{Z}_{\geq1}$:

\begin{singlespace}
\noindent (i) at least one element of the pair $\left\{ \gamma^{\left(n\right)},\widetilde{\delta}^{\left(n\right)}/e\right\} $
is transcendental;

\noindent (ii) at least one element of the pair $\left\{ \gamma^{\left(n\right)},\widetilde{\delta}^{\left(n\right)}\right\} $
is transcendental; and

\noindent (iii) at least two elements of the triple $\left\{ \gamma^{\left(n\right)},\widetilde{\delta}^{\left(n\right)}/e,\widetilde{\delta}^{\left(n\right)}\right\} $
are transcendental.
\end{singlespace}
\end{cor}
\begin{proof}
\phantom{}

\medskip{}
\noindent The proofs of all three parts of this corollary are analogous to those
of the corresponding parts of Corollary 1.
\end{proof}
\selectlanguage{american}%
\begin{thm}
For the sequences \foreignlanguage{english}{$\left\{ \widetilde{\delta}^{\left(n\right)}/e\right\} _{n\geq1}$
and $\left\{ \widetilde{\delta}^{\left(n\right)}\right\} _{n\geq1}$:}

\selectlanguage{english}%
\begin{singlespace}
\noindent (i) $\widetilde{\delta}^{\left(n\right)}/e$ is transcendental
infinitely often;

\noindent (ii) $\underset{N\rightarrow\infty}{\liminf}\left(\#\left\{ n\in\left\{ 1,2,\ldots,N\right\} :\widetilde{\delta}^{\left(n\right)}/e\textrm{ is transcendental}\right\} /N\right)\geq1/2$;

\noindent (iii) $\widetilde{\delta}^{\left(n\right)}$ is transcendental
infinitely often; and

\noindent (iv) $\underset{N\rightarrow\infty}{\liminf}\left(\#\left\{ n\in\left\{ 1,2,\ldots,N\right\} :\widetilde{\delta}^{\left(n\right)}\textrm{ is transcendental}\right\} /N\right)\geq1/2$.
\end{singlespace}
\end{thm}
\selectlanguage{english}%
\begin{proof}
\phantom{}\medskip{}

\begin{singlespace}
\noindent The proofs of all parts of this theorem are analogous to
those of the corresponding parts of Theorem 3.
\end{singlespace}
\end{proof}

\section{Conclusion}

\begin{singlespace}
\noindent In the present article, we defined sequences of generalized
Euler-Mascheroni, Euler-Gompertz, and Eta constants, denoted by $\gamma^{\left(n\right)}$,
$\delta^{\left(n\right)}$, and $\eta^{\left(n\right)}$, respectively.
After characterizing the basic asymptotic behavior of these sequences,
we provided the following (unconditional) results:
\end{singlespace}

\begin{singlespace}
{\footnotesize$\bullet\:$} the $\gamma^{\left(n\right)}$ (and therefore
$\Gamma^{\left(n\right)}\left(1\right)=\left(-1\right)^{n}\gamma^{\left(n\right)}$)
are transcendental infinitely often, with the density of transcendental
values among $n\in\left\{ 1,2,\ldots,N\right\} $ bounded below by
$\beta\left(N\right)=\max\left\{ 0,\sqrt{N-1}-3/2\right\} /N$, where
$\beta\left(N\right)\asymp N^{-1/2}$ as $N\rightarrow\infty$;

{\footnotesize$\bullet\:$} the $\eta^{\left(n\right)}$ are algebraically
independent, and therefore transcendental, for all $n\in\mathbb{Z}_{\geq0}$;

{\footnotesize$\bullet\:$} at least one element of each pair, $\left\{ \gamma^{\left(n\right)},\delta^{\left(n\right)}/e\right\} $
and $\left\{ \gamma^{\left(n\right)},\delta^{\left(n\right)}\right\} $,
and at least two elements of the triple $\left\{ \gamma^{\left(n\right)},\delta^{\left(n\right)}/e,\delta^{\left(n\right)}\right\} $
are transcendental for all $n\in\mathbb{Z}_{\geq1}$; and

{\footnotesize$\bullet\:$} both the $\delta^{\left(n\right)}/e$
and $\delta^{\left(n\right)}$ are transcendental infinitely often,
with lower asymptotic densities of at least $1/2$.
\end{singlespace}

Subsequently, for the generalized non-alternating Euler-Gompertz and
Eta constants, denoted by $\widetilde{\delta}^{\left(n\right)}$ and
$\widetilde{\eta}^{\left(n\right)}$, respectively, we found:

\begin{singlespace}
{\footnotesize$\bullet\:$} the $\widetilde{\eta}^{\left(n\right)}$
are algebraically independent, and therefore transcendental, for all
$n\in\mathbb{Z}_{\geq0}$;

{\footnotesize$\bullet\:$} at least one element of each pair, $\left\{ \gamma^{\left(n\right)},\widetilde{\delta}^{\left(n\right)}/e\right\} $
and $\left\{ \gamma^{\left(n\right)},\widetilde{\delta}^{\left(n\right)}\right\} $,
and at least two elements of the triple $\left\{ \gamma^{\left(n\right)},\widetilde{\delta}^{\left(n\right)}/e,\widetilde{\delta}^{\left(n\right)}\right\} $
are transcendental for all $n\in\mathbb{Z}_{\geq1}$; and

{\footnotesize$\bullet\:$} both the $\widetilde{\delta}^{\left(n\right)}/e$
and $\widetilde{\delta}^{\left(n\right)}$ are transcendental infinitely
often, with lower asymptotic densities of at least $1/2$.

Although our primary focus has been to investigate properties of the
sequences $\gamma^{\left(n\right)}$, $\delta^{\left(n\right)}$,
$\eta^{\left(n\right)}$, $\widetilde{\delta}^{\left(n\right)}$,
and $\widetilde{\eta}^{\left(n\right)}$, it is important to note
that further insights may be obtained by decomposing the sequence
of $E$-functions in (22) by the method of multisections. For example,
$F_{n}\left(z\right),\:n\in\mathbb{Z}_{\geq0}$ can be split into
the mod-2 multisection components
\[
G_{n}\left(z\right)=-n!{\displaystyle \sum_{k=1,3,\ldots}\dfrac{z^{k}}{k^{n}\cdot k!}}
\]
and
\[
H_{n}\left(z\right)=-n!{\displaystyle \sum_{k=2,4,\ldots}\dfrac{z^{k}}{k^{n}\cdot k!}},
\]
each of which is itself an $E$-function, and these new functions
can be arranged into a system of linear differential equations with
coefficients in $\mathbb{Q}\left(z\right)$ and a unique finite singularity
at $z=0$. By employing the Kolchin-Ostrowski differential-field framework
(see Srinivasan {[}16{]}), we then can show that, for any $m\in\mathbb{Z}_{\geq1}$,
the functions $G_{1}\left(z\right),H_{1}\left(z\right),G_{2}\left(z\right),H_{2}\left(z\right),\ldots,G_{m}\left(z\right),H_{m}\left(z\right)$
are algebraically independent over $\mathbb{C}\left(z,G_{0}\left(z\right),H_{0}\left(z\right)\right)$.
This allows us to conclude, via the refined Siegel-Shidlovskii theorem
(see Beukers {[}13{]}), that for $z=1$, the set of values $\left\{ G_{1}\left(1\right),H_{1}\left(1\right),G_{2}\left(1\right),H_{2}\left(1\right),\ldots,G_{m}\left(1\right),H_{m}\left(1\right)\right\} $
are algebraically independent over $\overline{\mathbb{Q}}\left(G_{0}\left(1\right),H_{0}\left(1\right)\right)$,
and therefore transcendental. Finally, recognizing that
\[
G_{n}\left(1\right)=-n!{\displaystyle \sum_{k=1,3,\ldots}\dfrac{1}{k^{n}\cdot k!}=\dfrac{1}{2}\left(\widetilde{\eta}^{\left(n\right)}-\eta^{\left(n\right)}\right)=\dfrac{1}{2}\left(\dfrac{\widetilde{\delta}^{\left(n\right)}}{e}-\dfrac{\delta^{\left(n\right)}}{e}\right)}
\]
and
\[
H_{n}\left(1\right)=-n!{\displaystyle \sum_{k=2,4,\ldots}\dfrac{1}{k^{n}\cdot k!}=\dfrac{1}{2}\left(\widetilde{\eta}^{\left(n\right)}+\eta^{\left(n\right)}\right)=\dfrac{1}{2}\left(2\gamma^{\left(n\right)}+\dfrac{\widetilde{\delta}^{\left(n\right)}}{e}+\dfrac{\delta^{\left(n\right)}}{e}\right)},
\]
we can use the fact that $\widetilde{\delta}^{\left(n\right)}/e-\delta^{\left(n\right)}/e=2G_{n}\left(1\right)$
is transcendental to conclude that at least one element of the pair
$\left\{ \delta^{\left(n\right)}/e,\widetilde{\delta}^{\left(n\right)}/e\right\} $
is transcendental for all $n\in\mathbb{Z}_{\geq1}$; and further (via
Corollary 1(i) and Corollary 2(i)) that at least two elements of the
triple $\left\{ \gamma^{\left(n\right)},\delta^{\left(n\right)}/e,\widetilde{\delta}^{\left(n\right)}/e\right\} $
are transcendental.
\end{singlespace}

\section*{Appendix}

\begin{singlespace}
\noindent\emph{Proof of Proposition 1.\medskip{}
}

\noindent We consider the three parts of the proposition in the order:
(iii), (ii), (i).\medskip{}

\noindent (iii) From (8), it is easy to see that
\[
\eta^{\left(n\right)}<n!\left[1-\dfrac{1}{2^{n+1}}+\left(\dfrac{1}{6}\right)\dfrac{1}{3^{n}}+\left(\dfrac{1}{24}\right)\dfrac{1}{4^{n}}+\cdots\right]
\]
\[
<n!\left[1-\dfrac{1}{2^{n+1}}+\left(\dfrac{1}{6}+\dfrac{1}{24}+\cdots\right)\dfrac{1}{3^{n}}\right]
\]
\[
=n!\left[1-\dfrac{1}{2^{n+1}}+\dfrac{\left(e-5/2\right)}{3^{n}}\right]
\]
and
\[
\eta^{\left(n\right)}>n!\left[1-\dfrac{1}{2^{n+1}}-\left(\dfrac{1}{6}\right)\dfrac{1}{3^{n}}-\left(\dfrac{1}{24}\right)\dfrac{1}{4^{n}}-\cdots\right]
\]
\[
>n!\left[1-\dfrac{1}{2^{n+1}}-\left(\dfrac{1}{6}+\dfrac{1}{24}+\cdots\right)\dfrac{1}{3^{n}}\right]
\]
\[
=n!\left[1-\dfrac{1}{2^{n+1}}-\dfrac{\left(e-5/2\right)}{3^{n}}\right].
\]
It then follows that
\[
\eta^{\left(n\right)}=n!\left(1-\dfrac{1}{2^{n+1}}+O\left(\dfrac{1}{3^{n}}\right)\right).
\]

\noindent (ii) Substituting $u=e^{-x}$ into the integral in (6) yields
\[
\delta^{\left(n\right)}=\left(-1\right)^{n+1}e{\displaystyle \int_{1}^{\infty}\left[\ln\left(u\right)\right]^{n}e^{-u}du}
\]
\[
\Longrightarrow\left|\delta^{\left(n\right)}\right|=e{\displaystyle \int_{1}^{\infty}\left[\ln\left(u\right)\right]^{n}e^{-u}du}=e{\displaystyle \int_{1}^{\infty}e^{\phi_{n}\left(u\right)}du},\qquad\qquad\textrm{(A1)}
\]
where $\phi_{n}\left(u\right)=n\ln\left(\ln\left(u\right)\right)-u$.
We then employ Laplace's (saddle-point) method to approximate this
integral.
\end{singlespace}

\begin{singlespace}
Taking derivatives of $\phi_{n}\left(u\right)$ with respect to $u$
gives
\[
\phi_{n}^{\prime}\left(u\right)=\dfrac{n}{u\ln\left(u\right)}-1\:\textrm{and }\:\phi_{n}^{\prime\prime}\left(u\right)=-\dfrac{n}{u^{2}\ln\left(u\right)}\left(1+\dfrac{1}{\ln\left(u\right)}\right)<0,
\]
revealing that $\phi_{n}\left(u\right)$ enjoys a unique global maximum
at the saddle point
\[
u^{*}\ln\left(u^{*}\right)=n\Longleftrightarrow u^{*}=\dfrac{n}{W\left(n\right)},\:\ln\left(u^{*}\right)=W\left(n\right).
\]
Then
\[
\phi_{n}\left(u^{*}\right)=n\ln\left(\ln\left(u^{*}\right)\right)-\dfrac{n}{\ln\left(u^{*}\right)}=n\ln\left(W\left(n\right)\right)-\dfrac{n}{W\left(n\right)},
\]
\[
\left|\phi_{n}^{\prime\prime}\left(u^{*}\right)\right|=\dfrac{W\left(n\right)}{n}\left(1+\dfrac{1}{W\left(n\right)}\right)=\dfrac{W\left(n\right)+1}{n},
\]
and (A1) can be approximated by
\[
\left|\delta^{\left(n\right)}\right|=e\cdot e^{\phi_{n}\left(u^{*}\right)}\sqrt{\dfrac{2\pi}{\left|\phi_{n}^{\prime\prime}\left(u^{*}\right)\right|}}\left(1+o\left(1\right)\right)
\]
\[
=e\left[W\left(n\right)\right]^{n}\exp\left(-\dfrac{n}{W\left(n\right)}\right)\sqrt{\dfrac{2\pi n}{W\left(n\right)+1}}\left(1+o\left(1\right)\right).
\]

\end{singlespace}

\begin{singlespace}
\noindent (i) Finally, we assemble the results in parts (iii) and
(ii) via (9), giving
\[
\gamma^{\left(n\right)}=n!\left(1-\dfrac{1}{2^{n}\cdot2!}+\dfrac{1}{3^{n}\cdot3!}-\dfrac{1}{4^{n}\cdot4!}+\cdots\right)-\dfrac{1}{e}\left(-1\right)^{n+1}e{\displaystyle \int_{1}^{\infty}\left[\ln\left(u\right)\right]^{n}e^{-u}du}
\]
\[
=n!\left(1-\dfrac{1}{2^{n+1}}+O\left(\dfrac{1}{3^{n}}\right)\right)-\left(-1\right)^{n+1}\left[W\left(n\right)\right]^{n}\exp\left(-\dfrac{n}{W\left(n\right)}\right)\sqrt{\dfrac{2\pi n}{W\left(n\right)+1}}\left(1+o\left(1\right)\right)
\]
\[
=n!\left(1-\dfrac{1}{2^{n+1}}+O\left(\dfrac{1}{3^{n}}\right)\right)+\left(n!\right)o\left(\dfrac{1}{3^{n}}\right)
\]
\[
=n!\left(1-\dfrac{1}{2^{n+1}}+O\left(\dfrac{1}{3^{n}}\right)\right).
\]
\medskip{}

\noindent\emph{Proof of Proposition 2.}\medskip{}

\noindent We consider the two parts of the proposition in reverse
order.\medskip{}

\noindent (ii) From (26), it follows that
\[
\widetilde{\eta}^{\left(n\right)}>-n!\left[1+\dfrac{1}{2^{n+1}}+\left(\dfrac{1}{6}+\dfrac{1}{24}+\cdots\right)\dfrac{1}{3^{n}}\right]
\]
\[
=-n!\left[1+\dfrac{1}{2^{n+1}}+\dfrac{\left(e-5/2\right)}{3^{n}}\right]
\]
and
\[
\widetilde{\eta}^{\left(n\right)}<-n!\left[1+\dfrac{1}{2^{n+1}}+\left(\dfrac{1}{6}\right)\dfrac{1}{3^{n}}\right],
\]
implying
\[
\widetilde{\eta}^{\left(n\right)}=-n!\left(1+\dfrac{1}{2^{n+1}}+O\left(\dfrac{1}{3^{n}}\right)\right).
\]

\end{singlespace}

(i) Now solve for $\widetilde{\delta}^{\left(n\right)}$ via (27),
using the initial expression for $\gamma^{\left(n\right)}$ in the
proof of Proposition 1(i): 
\[
\begin{array}{c}
\widetilde{\delta}^{\left(n\right)}=e\left[-n!\left(1+\dfrac{1}{2^{n}\cdot2!}+\dfrac{1}{3^{n}\cdot3!}+\dfrac{1}{4^{n}\cdot4!}+\cdots\right)-n!\left(1-\dfrac{1}{2^{n}\cdot2!}+\dfrac{1}{3^{n}\cdot3!}-\dfrac{1}{4^{n}\cdot4!}+\cdots\right)\right.\\
\left.+\dfrac{1}{e}\left(-1\right)^{n+1}e{\displaystyle \int_{1}^{\infty}\left[\ln\left(u\right)\right]^{n}e^{-u}du}\right]
\end{array}
\]
\[
=e\left[-2n!\left(1+\dfrac{1}{2\cdot3^{n+1}}+O\left(\dfrac{1}{5^{n}}\right)\right)+\left(n!\right)o\left(\dfrac{1}{5^{n}}\right)\right]
\]
\[
=-n!\left(2e+\dfrac{e}{3^{n+1}}+O\left(\dfrac{1}{5^{n}}\right)\right).
\]

\end{document}